\documentclass[12pt,reqno]{amsart}
\usepackage[active]{srcltx}
\usepackage{amscd,amssymb,amsthm,euscript}
\newtheorem{thm}{Theorem}[section]

\newtheorem{prop}[thm]{Proposition}

\theoremstyle{definition}
\newtheorem{rem}{Remark}[thm]

\DeclareMathOperator{\interior}{\ensuremath{\mathop{int}}}

\newcommand{\h}{\mathcal{H}}

\newcommand{\B}{\mathcal{B}}

\newcommand{\R}{\mathbb{R}}
\newcommand{\F}{\mathcal{F}}

\newcommand{\G}{\mathcal{G}}
\newcommand{\Q}{\mathbb{Q}} 

\newcommand{\LL}{\mathcal{L}}

\newcommand{\x}{\times}
\newcommand{\iso}{\cong}
\newcommand{\Z}{\mathbb{Z}}
\newcommand{\cont}{\subseteq}

\newcommand{\dx}{ \{ D_x \} }
\newcommand{\twohat}[1] {\hat{\hat{#1}}} 

\newcommand{\te}{\otimes}

\begin{document}

\title{ Spectral multiplicity and odd K-theory-II}

\author{Ronald G. Douglas}
\address{Department of Mathematics\\
Texas A\&M University\\
College Station, TX 77843-3368}
\email{rdouglas@math.tamu.edu}
\author{Jerome Kaminker}
\address{Department of Mathematics\\UC Davis\\Davis, CA 95616}
\email{kaminker@math.ucdavis.edu}
\subjclass{19K56, 58J30, 46L87}
\keywords{K-theory, unbounded selfadjoint Fredholms}
\date{\today}

\begin{abstract}
Let $\dx$ be a family of unbounded self-adjoint Fredholm operators representing an element of $K^1(M)$. Consider the first two components of the Chern character.  It is known that these correspond to the spectral flow of the family and the index gerbe.  In this paper we consider descriptions of these classes, both of which are in the spirit of holonomy.  These are then studied for families parametrized by a closed 3-manifold.  A connection between the multiplicity of the spectrum (and how it varies) and these classes is developed.   

\end{abstract}
\maketitle

\section{Introduction}

In a previous paper, \cite{Douglas-Kaminker:1}, we studied the set of unbounded, self-adjoint, Fredholm operators with compact resolvent, filtered by the maximum dimension of eigenspaces.  One goal was to relate the vanishing of the components of the Chern character of the K-theory class determined by a family of such operators to its maximum filtration.  In the present note we will give a simple uniform description of the first two terms in the Chern character and study them in the case of certain families over a closed 3-manifold.  

\section{Invariants of families of operators}

We will consider families of operators $\dx$ where $x \in M$, $M$ a closed smooth manifold.  The operators will be unbounded, self-adjoint, Fredholm operators with compact resolvent. We follow the notation and ideas of the paper, \cite{Douglas-Kaminker:1}, and refer there for details.  The (spectral) graph of the family is the closed subset $\Gamma(\dx) = \{(x,\lambda) | \lambda \in sp(D_x) \} \cont  M \x \R$.  If the graph is not connected, then the family determines the trivial element in $K^1 (M)$.  In \cite{Douglas-Kaminker:1}, the approach taken was to adapt topological obstruction theory to determine when one deform the family to one with a disconnected graph.  The obstructions met along the way were related to the components of the Chern character of the K-theoy class defined by the family. Under the assumption that the multiplicity of the spectrum of the family was bounded by 2, it was determined that the first obstruction corresponded to spectral flow and the second to the index gerbe.  In the next two subsections we will give elementary definitions of these classes.  To this end, we note that there is a finite cover of the parameter space $M$, $\{U_i\}$, $i=1,\ldots,N$, and real numbers $\lambda_i$, such that $\lambda_i$ is not in the spectrum of $D_x$, for $x \in U_{i}$. Moreover, we can and will assume that the sets in the cover and their finite intersections are contractible. 
We will build the invariants relative to this data. 

\subsection{Spectral flow}

We will define a \v{C}ech cocycle in $Z^1(\{U_i\},\underline{\Z})$.  Fix an open set $U_i$ in the cover and, for $x \in U_i$, let $P_{U_i} (x)$ be the orthogonal projection onto the subspace of $\h$ spanned by the  eigenspaces of $D_x$ for eigenvalues greater than $\lambda_i$.  Note that $P_{U_i}(x)$ varies continuously in $x$.  Suppose that $U_j$ is another element of the cover and that $x \in U_i \cap U_j$.  If $\lambda_i > \lambda_j$, then  $P_{U_i}(x) - P_{U_j}(x)$ is a finite rank projection.  so that one may associate to the (ordered) pair of projections their codimension, $\dim (P_{U_i}(x), P_{U_j}(x)) \in \Z$. However, for simplicity in stating and proving the next Proposition, we will use the notion of essential codimension, $ec(P_{U_i}(x), P_{U_j}(x))$, which agrees with the usual notion of codimension in the case above.  We refer to \cite{BDF77} for the definitions and properties. To this end, we define $c(U_i,U_j)(x) = ec(P_{U_i}(x), P_{U_j}(x))$.  Since $U_i \cap U_j$ is connected, the value is independent of $x$.  This leads to the following result.
\begin{prop}
 The function $c$ is a 1-dimensional cocycle in $Z^1(\{U_l\},\underline{\Z})$.  Its image,  $sf(\dx) = [c] \in H^1(M,\Z) \in H^1(M,\Q)$  is equal to the spectral flow class of the family $\dx$, which is a rational multiple of the first component of the Chern character of the K-theory class represented by the family.
\end{prop}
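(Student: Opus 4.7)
The plan divides the proof into three pieces: verifying the cocycle relation, identifying $[c]$ with the spectral flow class, and invoking the known identification of spectral flow with a rational multiple of the first component of the Chern character.

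For the cocycle condition, I would work on a triple intersection $U_i \cap U_j \cap U_k$ and apply the additivity of essential codimension from \cite{BDF77}: whenever $P-Q$ and $Q-R$ are compact, $ec(P,R) = ec(P,Q) + ec(Q,R)$. In our situation the spectral projections $P_{U_i}(x), P_{U_j}(x), P_{U_k}(x)$ differ pairwise by finite-rank projections onto the spectral subspaces for eigenvalues lying between the corresponding cutoffs $\lambda_\cdot$, so additivity yields $c(U_j, U_k)(x) - c(U_i, U_k)(x) + c(U_i, U_j)(x) = 0$, i.e.\ $\delta c = 0$. The fact that $c(U_i, U_j)$ is constant on the connected set $U_i \cap U_j$ is forced by continuity of $x \mapsto P_{U_i}(x) - P_{U_j}(x)$ together with integrality of $ec$.

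To identify $[c]$ with the spectral flow class, I would pair against an arbitrary smooth loop $\gamma: S^1 \to M$ and check that the result is the spectral flow of $\dx$ around $\gamma$. Partition $S^1$ into arcs, the $k$-th contained in some $U_{i_k}$, let $x_k$ be the transition point between the $k$-th and $(k{+}1)$-st arcs, and compute $\langle [c], [\gamma] \rangle = \sum_k c(U_{i_k}, U_{i_{k+1}})(x_k)$. Since $\lambda_{i_k}$ avoids the spectrum of $D_{\gamma(t)}$ along the $k$-th arc, no eigenvalue crosses $\lambda_{i_k}$ on that arc, so the classical spectral flow, computed by the partition-with-variable-reference-level recipe, reduces to the signed count at each transition $x_k$ of eigenvalues of $D_{x_k}$ lying between $\lambda_{i_k}$ and $\lambda_{i_{k+1}}$. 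That signed count is exactly $ec(P_{U_{i_k}}(x_k), P_{U_{i_{k+1}}}(x_k))$, giving the desired equality.

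The final assertion, that spectral flow is a rational multiple of the degree-one component of the Chern character, is the standard correspondence between odd $K$-theory and odd cohomology; in the present setting it is proved in \cite{Douglas-Kaminker:1} by passing to the bounded transform $D_x(1+D_x^2)^{-1/2}$ and comparing with the universal Chern character on the classifying space of self-adjoint Fredholms. I expect the main obstacle to lie in the second step: one must be careful with sign and normalization conventions for $ec$ and spectral flow, and must verify independence of the pairing on the loop representative, the arc partition, and the chosen indices $i_k$. Both of these last points reduce to the cocycle property and the constancy of $c$ on connected double intersections established in the first step.
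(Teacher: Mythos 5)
Your argument is correct and fills in exactly the ``direct modification'' that the paper's one-line proof gestures at: the additivity of essential codimension from \cite{BDF77} gives the cocycle identity, and pairing against loops via an arc partition with variable reference levels recovers the spectral-flow definition from \cite{Douglas-Kaminker:1}. This matches the paper's intended route, just written out in full.
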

\begin{proof}
 Using the definition of spectral flow described in \cite{Douglas-Kaminker:1}, the result follows by a direct modification.   
\end{proof}

\subsection{Index gerbe}

We will construct a 3-dimensional integral cohomology class on $M$ and show that it agrees with the Dixmier-Douady invariant of the index gerbe.  This is based on the work of Carey, Mickelsson and others, \cite{Lott:2002,Carey:1, Carey-Mickelsson:1, Mickelsson:1}.  We again start with the family of projections defined over the sets in the open cover, $\{U_i \}$.  Fix a point $x \in U_i$.  The projection $P_{U_i}(x)$ determines a quasi-free representation of the CAR algebra, $ \alpha_{U_i}(x) : CAR(\h) \to \B (\F_{P_{U_i}(x)}(\h))$, where $\F_{P_{U_i}(x)}(\h) = \F(P_{U_i}(x)\h) \te \F(\overline{(I-P_{U_i}(x))\h})$ is the Fermionic Fock space. As a reference to that theory we refer to \cite{Araki:1}. If $x \in {U_i} \cap {U_j}$, then we have two different representations, $\alpha_{U_i}(x)$ and   
$\alpha_{U_j}(x)$, which are equivalent.  This follows since the projections yielding them differ by a finite rank projection and one can construct a canonical intertwining unitary operator, (see Appendix), $$S_{P_{U_j}(x),P_{U_i}(x)} : \F_{P_{U_j}(x)}(\h) \to \F_{P_{U_i}(x)}(\h),$$  satisfying
\begin{equation}
 S_{P_{U_j}(x),P_{U_i}(x)} \alpha_{U_i}(x) S_{P_{U_j}(x),P_{U_i}(x)}^* = \alpha_{U_j}(x).
\end{equation}
Note that $S_{P_{U_j}(x),P_{U_i}(x)}$ is defined only up to a choice of a scalar $z \in S^1$.  This ambiguity disappears in the following,
\begin{equation}
 \hat S_{P_{U_j}(x),P_{U_i}(x)} = Ad_{S_{P_{U_j}(x),P_{U_i}(x)}} : \B(\F_{P_{U_j}(x)}(\h)) \to \B(\F_{P_{U_i}(x)}(\h)).
\end{equation}

Next, suppose that $x \in {U_i} \cap {U_j} \cap {U_k}$, for sets ${U_i},{U_j},{U_k}$ in the cover.  Then there exists a function $g(U_i , U_j , U_k) : U_i \cap U_j \cap U_k \to S^1$ such that                                                                                                                                                                                                                                                                  one has 
\begin{equation}
 \hat S_{P_{U_j}(x),P_{U_i}(x)}\circ \hat S_{P_{U_i}(x),P_{U_k}(x)}\circ \hat S_{P_{U_k}(x),P_{U_j}(x)} = g({U_i},{U_j},{U_k})(x) \in S^1,
\end{equation}
since the composition on the left intertwines an irreducible representation of $CAR(\h)$, and hence is a scalar multiple of the identity.  We will show that $g({U_i},{U_j},{U_k})(x)$ can be defined so as to be a continuous function of $x$.  To see this, note first that the projections onto the finite dimensional differences, $\h_{i,j}(x)$, are norm continuous.  We claim there is a continuous family of unitaries, $R_{U_i, U_j}(x)$ with the property that $R_{U_i, U_j}(x)(\h_{i,j}(x)) = \h_{i,j}(x_0)$.  Choosing an orthonormal basis for $\h_{i,j}(x_0)$, a volume element can be determined in a continuous manner for each $\h_{i,j}(x)$.  This is what is needed to define the intertwining unitaries in a continuous way. See the appendix for more on this matter. We will be using sheaf cohomology with respect to the presheaf of $S^1$-valued functions on $M$. c.f. \cite{Brylinski:1}

\begin{prop}
  The family, $\{g({U_i},{U_j},{U_k})\}$ defines a cocycle in $C^2 (\{{U_i}\},\underline{S}^1)$ and, hence, a cohomology class  $[g] \in H^2(\{{U_i}\}, \underline{S}^1)$.
\end{prop}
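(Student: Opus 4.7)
The plan is to split the verification into continuity of each $g(U_i, U_j, U_k)$ on its triple intersection, and the \v{C}ech 2-cocycle identity on quadruple intersections. Continuity has essentially been argued in the discussion preceding the proposition: on any double intersection the projections onto the finite-dimensional difference subspaces $\h_{i,j}(x)$ are norm-continuous, and the sketched construction --- transporting them onto fixed models by continuous unitaries $R_{U_i, U_j}(x)$ and fixing a volume element --- pins down a continuous choice of intertwining unitary $S_{P_{U_j}(x), P_{U_i}(x)}$. Since $g(U_i, U_j, U_k)(x)$ is the scalar arising from a finite composition of these intertwiners, it is continuous in $x$, so $g$ lies in $C^2(\{U_i\}, \underline S^1)$.

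For the cocycle identity, I would fix a quadruple intersection $U_i \cap U_j \cap U_k \cap U_l$ and exploit associativity of unitary composition. After normalizing the lifts so that $S_{P_a, P_b}^{-1} = S_{P_b, P_a}$, the defining identity rearranges into
\[
S_{P_j, P_i} \, S_{P_i, P_k} \;=\; g(U_i, U_j, U_k) \, S_{P_j, P_k}.
\]
Evaluating the three-step composition $S_{P_j, P_i}\, S_{P_i, P_k}\, S_{P_k, P_l}$ by collapsing the first pair first, versus the last pair first, produces two expressions of the form (scalar) $\cdot$ (scalar) $\cdot S_{P_j, P_l}$, the scalars in each case being triple $g$'s evaluated at certain orderings of $\{i,j,k,l\}$. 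Equating the two expressions and using the elementary symmetries of $g$ in its arguments --- cyclic invariance and anti-cyclic conjugation, both obtained by conjugating and inverting the defining relation --- yields
\[
g(U_i, U_j, U_k) \, g(U_i, U_k, U_l) \;=\; g(U_j, U_k, U_l) \, g(U_i, U_j, U_l),
\]
which, since $S^1$ is abelian, is the multiplicative form of the \v{C}ech 2-cocycle condition. The cohomology class $[g] \in H^2(\{U_i\}, \underline S^1)$ follows immediately.

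The main subtlety will be tracking the $S^1$-ambiguity in each $S_{P_a, P_b}$: one must fix lifts consistently with the normalization above and verify that the scalars absorbed and produced in the rearrangements match up coherently across the four triples inside the quadruple intersection. A different choice of phases modifies $g$ by a \v{C}ech 1-coboundary with values in $\underline S^1$, so the class $[g]$ is unambiguous. The whole argument is ultimately driven by associativity of operator composition together with Schur's lemma applied to the irreducible quasi-free representations of $CAR(\h)$.
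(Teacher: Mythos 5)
Your argument is correct and fills in exactly the ``direct computation'' that the paper leaves to the reader: continuity follows from the preceding discussion of the $R_{U_i,U_j}(x)$ and volume elements, and the cocycle identity is associativity of composition plus Schur's lemma for the irreducible CAR representations, with the cyclic and inversion symmetries of $g$ then used to recast the resulting relation in standard \v{C}ech form. This is the same approach as the paper, spelled out in detail.
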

\begin{proof}
 This is a direct computation.
\end{proof}

There is a natural map $H^2(\{{U_i}\}, \underline{S}^1) \to H^3(M,\Z)$, which is an isomorphism if the cover is well chosen.  We will denote the image of $[g]$ by $\G(\dx) \in H^3(M,\Z)$.  One can check that it depends on the family $\dx$ only up to homotopy.

\begin{thm}
 The class $\G(\dx)$ is equal to the Dixmier-Douady invariant of the Index Gerbe, (as, e.g. defined by Lott, \cite{Lott:2002}).
\end{thm}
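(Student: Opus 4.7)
The plan is to compare the \v{C}ech cocycle $\{g(U_i,U_j,U_k)\}$ constructed above with a \v{C}ech representative of Lott's Dixmier-Douady class on a common refinement of the cover. Because the Dixmier-Douady invariant in $H^3(M,\Z)$ depends only on the homotopy class of the family and is insensitive to the choice of local trivialization, it suffices to exhibit a $1$-cochain realizing the coboundary between the two cocycles on triple intersections of a sufficiently fine cover; equivalently, one shows they agree as elements of $H^2(\{U_i\},\underline{S}^1)$.

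First I would recall that, restricted to the local data $(U_i,\lambda_i)$, Lott's index gerbe in \cite{Lott:2002} arises from the very same spectral splitting $\h = P_{U_i}(x)\h \oplus (I-P_{U_i}(x))\h$ used above: over each double intersection $U_i\cap U_j$ one obtains a canonical principal $S^1$-bundle --- equivalently the unit circle in the determinant line of the Fredholm pair $(P_{U_i}(x),P_{U_j}(x))$ --- and the Dixmier-Douady cocycle is computed by the triple product of its transition sections. A \v{C}ech representative in $\underline{S}^1$ is described explicitly by Carey-Mickelsson \cite{Carey:1,Carey-Mickelsson:1,Mickelsson:1}, and it is this representative that I would take as the working model for Lott's class.

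Next I would identify the intertwining unitary $S_{P_{U_j}(x),P_{U_i}(x)}$ with a unit section of the determinant $S^1$-bundle over $U_i\cap U_j$: the $S^1$ indeterminacy in $S$ is precisely the phase of a unit vector in the determinant line, and $\hat S = \mathrm{Ad}_S$ kills this phase on double intersections but records its triple product on $U_i\cap U_j\cap U_k$. Under the appendix's continuous choice of volume element on $\h_{i,j}(x)$, this identification is canonical, and the scalar $g(U_i,U_j,U_k)(x)$ then equals the Carey-Mickelsson triple product. Taking the image under the standard map $H^2(\{U_i\},\underline{S}^1)\to H^3(M,\Z)$ yields the stated equality $\G(\dx) = \text{Dixmier-Douady}(\text{Index Gerbe})$.

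The main obstacle will be ensuring a globally coherent choice of phase for the intertwiners $S_{P_{U_j}(x),P_{U_i}(x)}$: different admissible conventions produce cohomologous but not literally equal cocycles, and Lott's original definition is phrased in differential-geometric (connection and curving) terms rather than in pure \v{C}ech data. To reconcile these, I would invoke the appendix's continuously varying volume elements on the finite-dimensional difference spaces $\h_{i,j}(x)$ to fix a canonical section of the determinant bundle on each double intersection, and then use the fact that the Dixmier-Douady class is a complete topological invariant of the gerbe to forget the connection data entirely. Once the coherent phase choice is in place, the coincidence of cocycles --- and hence of classes in $H^3(M,\Z)$ --- reduces to a direct computation with the triple composition of quasi-free Fock intertwiners.
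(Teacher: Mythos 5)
Your proposal takes essentially the same route as the paper. Both identify the line bundles $\LL_{U_i,U_j}$ that Hitchin/Lott attach to double intersections with the determinant bundles $\mathcal{L}_{P_{U_i},P_{U_j}}$ of the appendix, recognize the intertwiner $S_{P_{U_j}(x),P_{U_i}(x)}$ as corresponding to a (unit-normalized) section of that bundle via the continuous choice of volume element, and conclude that the triple product $g(U_i,U_j,U_k)$ is literally a \v{C}ech representative of the index gerbe's Dixmier-Douady class; your version simply spells out more explicitly the passage through a Carey--Mickelsson-style \v{C}ech model and the role of $\hat S = \mathrm{Ad}_S$ in erasing the phase on doubles while recording it on triples, where the paper compresses this into the single observation that tensoring the Fock bundle with $\LL_{U_i,U_j}$ is the same as applying $S$.
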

\begin{proof}
 Recall that Lott uses Hitchin's definition of gerbe, \cite{Hitchin:1}.  Thus, there are line bundles $\LL_{U_i, U_j}$ over $U_i \cap U_j$  possessing the necessary properties.  It is easily seen that tensoring the bundle of Fock spaces over $U_i \cap U_j$ with $\LL_{U_i, U_j}$ is the same as applying the family of unitary operators $S_{P_{U_j}(x),P_{U_i}(x)}$, (see appendix).  The result follows easily from this observation.  
\end{proof}

\section{A low dimensional example}

We make some preliminary observations.  First we recall the addition operation for families.  If $\dx$ and $\{ D'_x \} $ are families on the Hilbert spaces $\h$ and $\h'$ respectively, then the sum of the families is the direct sum on the Hilbert space $\h \oplus \h'$.  The graph of the sum is the union of the graphs of each summand, and the multiplicity of eigenvalues is the sum of the multiplicities on the intersection of the graphs and the multiplicity for the individual families on the symmetric difference of the graphs.  The inverse of a family is obtained by reversing the signs of the eigenvalues, or by replacing $\dx$ by $-\dx$.

Let $\dx$ be a family parametrized by a smooth, closed, connected manifold,  $M$.  Consider $sf(\dx) \in H^1(M,\Z) = [M,S^1]$.  Let $\alpha : M \to S^1$ represent this class.  Let $\{D'_z\}$ be a family on $S^1$ with spectral flow equal to 1.  Then $\dx - \alpha^* (\{D'_z\})$ has spectral flow zero and it can be analyzed by the methods of \cite{Douglas-Kaminker:1}.  One would like to do an analogous construction to eliminate the index gerbe class in $H^3(M,\Z)$, but because of possible torsion this may not be possible.  However, if $M$ is a closed 3-manifold, then this reduction can be done and we will analyze this case in the present section.

Assume we are given a family, $\dx$, parametrized by a smooth closed 3-dimensional manifold $M$. Let  $\Gamma = \Gamma (\dx) \cont M \x \R$ with projection $\pi : \Gamma \to M$ be the graph.  We will assume that $sf(\dx) = 0$, so that we have a spectral decomposition,
\begin{equation}
 \Gamma = \bigcup_{k\in \Z} G_k.
\end{equation}
We will first impose conditions on the intersections of the sets, $G_k$, of the decomposition.  At the end we show how these conditions can be relaxed in order to obtain the desired results.  To this end, we make the following assumptions. 
\begin{itemize}
 \item[A)] $G_0$ can have a non-empty intersection with  $G_{i}$ only if $i= 1$ or $i=-1$ and $G_0 \cap G_1 \cap G_{-1} = \emptyset$.
\end{itemize}
Condition (A) states that the family has multiplicity $\leq 2$ on $G_0$.

Our second assumption is

\begin{itemize} 

\item[B)] there is a closed ball, $B \cont M$, satisfying    $\ \pi(G_0 \cap G_1 ) \cont B$ and $\pi (G_0 \cap G_{-1} ) \cont M \setminus B $. 
\end{itemize}
Note that if $ \pi(G_0 \cap G_1) = M$, and the multiplicity of the family is $\leq 2$ on $G_0$, then the multiplicity is constant on $G_0$, and it was shown in \cite{Douglas-Kaminker:1} that this implies that the family is trivial.  On the other hand, if 
$ \pi(G_0 \cap G_1) = \emptyset$, then the graph is disconnected, and again it was shown in \cite{Douglas-Kaminker:1} that the family will be trivial. Thus, it suffices to consider the case that $ \pi(G_0 \cap G_1) \neq \emptyset \  \text{and }  \neq M $.

The boundary of  $B$ is a 2-sphere, $S^2$, over which there is a 2-plane bundle, $E$, whose fiber at $x$ is the space of eigenvectors of $g_0(x)$ and $g_1(x)$ in $\h$. This bundle is defined over $B$ and, hence, is trivial.  There is a splitting of $E|_{S^2}$ as the direct sum of two line bundles, $L_0$ and $L_1$.  The splitting is determined by the orthogonal eigenspaces for $g_0(x)$ and $g_1(x)$, repsectively, for $x$ outside, but close to, $B$.  Note that outside of $B$, but near to it, we have $g_0(x) < g_1(x)$. The line bundle $L_0$ over $S^2$ is determined by the homotopy class of its clutching map $ \kappa : S^1 \to U_1$,  and hence by an integer $k$.  Moreover, the integer associated to $L_1$ is $-k$.  Unless $k = 0$, the splitting will not extend over the ball, $B$.

Next, observe that there is a degree one map $c : M \to S^3$ which sends $\interior (B)$ to $S^3 \setminus \{ (0,0,-1) \}$ and $ M \setminus \interior (B) $ to $ (0,0,-1) $.  Let $\{ \partial_z^{(-k)}\}$ denote the Mickelsson family, \cite{Douglas-Kaminker:1}, determined by $-k$,  and  consider the family $\hat{\dx} = \dx +c^* ( \{ \partial_z^{(-k)}\})$ over $M$. We will show that this family is equivalent to a trivial family and, hence, $[\dx]  = c^* ( [\{ \partial_z^{(k)}\}]) \in K^1(M)$.

\begin{prop}
\label{sep1}
 The family $\hat{\dx}$ can be deformed to a family ${\hat{\hat{\dx}}}$ which satisfies $ \hat{\hat{G_0}} \cap \hat{\hat{G_1}} = \emptyset$.  Moreover, ${\hat{\hat{\dx}}}$  is equal to $\dx + c^* ( \{ \partial^{(-k)}_{z}\})$  outside a neighborhood $U \supseteq B$, with $\bar U$ disjoint from $G_0 \cap G_{-1}$.
\end{prop}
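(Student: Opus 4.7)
The plan is to use the pulled-back Mickelsson family to cancel the clutching integer $k$ that obstructs separating $G_0$ from $G_1$ over $B$, and then to build the deformation using the resulting trivialization. First I would choose an open neighborhood $U \supseteq B$ with $\bar U$ disjoint from $\pi(G_0 \cap G_{-1})$; this is possible by Assumption~(B) together with the compactness of $\bar B$ and of $\pi(G_0 \cap G_{-1})$. Supporting the entire deformation inside $U$ then gives for free the agreement with $\hat{\dx}$ on $M \setminus U$.

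Next I would analyze the pulled-back Mickelsson family near $\partial B$. By construction, $\{\partial^{(-k)}_z\}$ on $S^3$ has a single eigenvalue crossing at an interior point, and the associated $2$-plane eigenbundle on a small sphere linking that crossing splits into line bundles with clutching integers $-k$ and $k$. Since $c$ restricts to a degree-one homeomorphism from $\interior(B)$ onto $S^3 \setminus \{(0,0,-1)\}$ and sends $\partial B$ to a sphere linking the crossing, the pulled-back family $c^*(\{\partial^{(-k)}_z\})$ contributes, near the relevant eigenvalue level, a $2$-plane eigenbundle $L_0' \oplus L_1'$ over $\partial B$ with clutching integers $-k$ and $k$. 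Hence the combined eigenbundle over $\partial B$ contains the rank-two summand $L_0 \oplus L_0'$, whose first Chern class is $k + (-k) = 0$. As a rank-two bundle over $S^2$ with vanishing $c_1$ it is trivial, and any such trivialization extends continuously over the contractible ball~$B$.

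Finally, I would use this extension to carry out the deformation. In a collar of $\partial B$ inside~$B$ a continuous family of unitaries mixing $L_0$ and $L_0'$ carries the crossing eigenvectors of both summands onto the extended trivial frame, and on the interior of $B$ any choice of eigenvalue functions along the trivialized rank-two bundle whose level-$0$ and level-$1$ sheets remain disjoint produces $\hat{\hat{\dx}}$; the family is tapered back to $\hat{\dx}$ inside $U \setminus B$. The main obstacle is carrying out this rotation while maintaining self-adjointness, Fredholmness, and the positions of the remaining $G_i$; this succeeds because Assumption~(A) isolates the $G_0, G_1$ eigenvalues from the rest of the spectrum in a neighborhood of the crossings, so the deformation can be localized to the rank-two eigenspace of interest without introducing spurious intersections.
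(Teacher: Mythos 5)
Your proposal captures the same key topological observation as the paper's own proof: the clutching integer $k$ of $L_0$ is cancelled by the clutching integer $-k$ of $L'_0$ coming from $c^*(\{\partial^{(-k)}_z\})$, so $L_0 \oplus L'_0$ (and likewise $L_1 \oplus L'_1$) is a trivial rank-two bundle over $S^2$ whose trivialization extends across the ball, and this extension is what permits separating $G_0$ from $G_1$.

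There is, however, a gap in your deformation mechanism as stated. You describe the ``combined eigenbundle over $\partial B$'' as containing $L_0 \oplus L'_0$ and propose a family of unitaries ``mixing $L_0$ and $L'_0$.'' But $L_0$ is the eigenbundle at the level $g_0(x)$ of $\dx$ while $L'_0$ is at the level $g'_0(x)$ of the pulled-back Mickelsson family; these are generically different eigenvalues of the direct-sum operator, so $L_0$ and $L'_0$ do not sit inside a single eigenspace and a unitary rotating one into the other does not commute with $\hat D_x$. The paper handles this by first making the level functions coincide: shrink $B$ to $B' \subset B$, arrange $g_0 \equiv 0$ on $B'$ and $g'_0 \equiv 0$ on $c(B')$, and apply the flattening operation of \cite[p.~322]{Douglas-Kaminker:1}. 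After flattening, the multiplicity on $\pi^{-1}(B') \cap G_0$ is $4$, giving a genuine rank-$4$ eigenbundle $E$ over $B'$ whose boundary splitting is $L_0 \oplus L_1 \oplus L'_0 \oplus L'_1$; only then does $E \cong (L_0 \oplus L'_0) \oplus (L_1 \oplus L'_1)$ extend across $B'$. The separation is then achieved by the scaling operation of \cite[p.~324]{Douglas-Kaminker:1}: raise the eigenvalue on the $L_1 \oplus L'_1$ summand to a small $\varepsilon > 0$ on $B'$, decaying to $0$ on $B \setminus B'$ and leaving everything unchanged outside $B$. This yields $\twohat{G_0} \cap \twohat{G_1} = \es$ with the family unchanged outside a neighborhood of $B$. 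A secondary inaccuracy in your set-up: $c$ collapses all of $M \setminus \interior(B)$, including $\partial B$, to the single point $(0,0,-1)$; the $2$-sphere exhibiting the Mickelsson clutching data lies just inside $\partial B$, not on it. These are precisely the points where the rigor of the deformation lives, and they are what the flattening and scaling machinery from the companion paper supplies.
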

\begin{proof}
 To verify this claim we will use the methods of \cite{Douglas-Kaminker:1}. Shrink $B$ to $B' \cont B$.  We may arrange that $g_0(x) = 0$ for $x \in B'$ for $\dx$, and $g'_0(x) = 0$ for  $x \in c(B')$ for $c^* ( \{ \partial^{-k}_{z}\})$.  Now flatten, \cite[p. 322]{Douglas-Kaminker:1}, both $\dx$ and $c^* ( \{ \partial_z^{(-k)}\})$ on $B'$ relative to $B$.  The span of the eigenspaces for $g_0(x),g_1(x), g'_0(x), g'_1(x)$ is a 4-dimensional vector bundle $E$ over $B'$.  More specifically, the graph of the sum of the flattened families has the following structure on $\pi^{-1}(B')\cap G_0$. The multiplicity of each eigenvalue is 4, so there is a (trivial) 4-plane bundle, $E$, over $B$ with a natural splitting on the boundary into a direct sum of four line bundles, $L_0, L_1, L'_0$, and $L'_1$, where $L'_0$, and $L'_1$ are associated with $c^* ( \{ \partial_z^{(-k)}\})$ and $L_0, L_1$ with $\dx$. Now, $L_0$ and $L'_0$ are determined by integers which are negatives of each other.  Thus, the sum $L_0 \oplus L'_0$ is trivial, as is $L_1 \oplus L'_1$.  These facts imply that the splitting $E \iso (L_0 \oplus L'_0) \oplus (L_1 \oplus L'_1)$ extends across $\pi^{-1}(B')$. We may then use the scaling operation from \cite[p. 324]{Douglas-Kaminker:1} to deform the family by increasing the eigenvalue on the subspaces spanned by $(L_1 \oplus L'_1)$ to a small value, $\varepsilon > 0$ on $B'$, decaying to 0 on $B \setminus B'$, and leaving the family unchanged outside $B$ .  One then obtains that $\twohat{G_0} \cap \twohat{G_1} = \emptyset$ as required.      
\end{proof}

Note that a family $\hat{\hat{{\dx}}}$ for which $\twohat{G_{0}} \cap \twohat{G_{1}} = \emptyset  $ satisfies that $ \twohat{g_0}(x) < \twohat{g_1}(x) $ for all $ x \in M $, and hence the family is trivial.


We thus obtain, 

\begin{thm}
 For a family satisfying (A) and (B) above, one has
\begin{equation}
 \dx \simeq c^* ( \{ \partial^{k}_z\} ),
\end{equation}
where $k$ is the degree of the bundle as above.
\end{thm}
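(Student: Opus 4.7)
The plan is to reduce the theorem to a triviality statement for the auxiliary family $\hat{\dx} = \dx + c^*(\{\partial_z^{(-k)}\})$. Once I show that $\hat{\dx}$ represents the trivial class in $K^1(M)$, cancellation in the additive group of K-theory classes of families gives $[\dx] = -[c^*(\{\partial_z^{(-k)}\})]$; since the additive inverse of a family is obtained by reversing the signs of the spectrum, and since this operation carries the degree-$(-k)$ Mickelsson family to the degree-$k$ Mickelsson family, we conclude $[\dx] = [c^*(\{\partial_z^{k}\})]$, which is the desired equivalence.

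The core of the argument is thus the triviality of $\hat{\dx}$. Proposition \ref{sep1} provides a deformation $\hat{\hat{\dx}}$ of $\hat{\dx}$ satisfying $\hat{\hat{G_0}} \cap \hat{\hat{G_1}} = \emptyset$. Because the band labelling respects the ordering $\hat{\hat{g_0}} \le \hat{\hat{g_1}}$, the disjointness upgrades to the strict inequality $\hat{\hat{g_0}}(x) < \hat{\hat{g_1}}(x)$ at every point of $M$. By compactness one can then interpolate a continuous function $f : M \to \R$ with $\hat{\hat{g_0}}(x) < f(x) < \hat{\hat{g_1}}(x)$; the locus $\{(x,f(x))\}$ is disjoint from $\Gamma(\hat{\hat{\dx}})$ and separates $M \times \R$ into two open half-regions, each of which meets the spectral graph in a nonempty closed subset. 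Consequently $\Gamma(\hat{\hat{\dx}})$ is disconnected, and the observation at the start of Section 2 gives that $\hat{\hat{\dx}}$, and hence $\hat{\dx}$, represents the trivial class in $K^1(M)$.

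The delicate step is the passage from the local separation produced near $B$ by Proposition \ref{sep1} to the global spectral gap used above. Assumption (B) already guarantees $g_0(x) < g_1(x)$ outside $B$, so the flattening-and-scaling construction of Proposition \ref{sep1} need only remove the crossings lying over $B$. The \emph{moreover} clause, which confines the deformation to a neighborhood $U$ of $B$ whose closure is disjoint from $G_0 \cap G_{-1}$, further ensures that no new crossings of $\hat{\hat{g_0}}$ with $\hat{\hat{g_1}}$ are introduced elsewhere, and that the $G_{-1}$ band structure is undisturbed away from $B$. With the strict inequality $\hat{\hat{g_0}} < \hat{\hat{g_1}}$ holding globally, the interpolating function $f$ together with the disconnection argument completes the proof.
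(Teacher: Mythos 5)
Your proof is correct and takes essentially the same route as the paper: form $\hat{\dx} = \dx + c^*(\{\partial_z^{(-k)}\})$, invoke Proposition~\ref{sep1} to deform it to $\twohat{\dx}$ with $\twohat{G_0} \cap \twohat{G_1} = \emptyset$, observe that this forces $\twohat{g_0} < \twohat{g_1}$ everywhere so the graph disconnects and the family is trivial in $K^1(M)$, and then cancel using the inverse-by-sign-reversal description to get $[\dx] = [c^*(\{\partial_z^{k}\})]$. Your interpolating-function argument for disconnection and the discussion of why the crossings are confined to $B$ are elaborations of steps the paper treats as immediate (the latter is really just the content of Proposition~\ref{sep1}'s \emph{moreover} clause), but the logical skeleton is identical.
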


At this point, one would like to deform an arbitrary family into one which satisfies both hypotheses, (A) and (B).  While one can obtain (B), assuming (A), it is not so easy to establish (A) in the generality in which we are working.  Although we are able to deform a family with multiplicity less than or equal to 3 to one satisfying (A), the general case requires more elaborate techniques which will be addressed later.  Thus, we will thus assume now that (A) holds and describe next how we can use the methods of \cite{Douglas-Kaminker:1} to ensure that the set where the multiplicity is 2, $\pi(G_0 \cap G_1)$, is  contained in a ball whose complement contains $\pi(G_0 \cap G_{-1})$.

\begin{prop}
\label{engulf}
 Given a family satisfying (A), one can alway deform it to a family satisfying condition (B).
\end{prop}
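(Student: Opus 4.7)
The plan is to combine the flattening and scaling deformations from \cite{Douglas-Kaminker:1} with a standard engulfing argument inside the 3-manifold $M$. Write $A_+ = \pi(G_0 \cap G_1)$ and $A_- = \pi(G_0 \cap G_{-1})$ for the two components of the multiplicity-$2$ locus projected to $M$. By the discussion preceding the proposition we may assume $A_+$ is a proper nonempty closed subset of $M$, and a symmetric argument rules out $A_- = M$.

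First I would apply a generic transversality perturbation, compatible with (A), so that $A_+$ and $A_-$ become disjoint closed subsets of $M$ of codimension at least $2$. This uses that the condition $G_0 \cap G_1 \cap G_{-1} = \es$ is open and that a further generic shake-up of the family makes $A_+ \cap A_-$ (which corresponds to a codimension-$3$ phenomenon in $M$) empty. Then I would fix a small closed ball $B_0 \cont M \setminus A_-$ whose interior contains some chosen point $p \in A_+$. The heart of the argument is to push the rest of $A_+$ into $B_0$ by local deformations supported away from $A_-$.

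For each connected component $C$ of $A_+ \setminus \interior(B_0)$, I would choose an arc $\gamma$ in $M \setminus A_-$ joining $C$ to $\interior(B_0)$, together with a tubular neighbourhood $N$ of $\gamma$ disjoint from $A_-$. Using the flattening operation of \cite[p.~322]{Douglas-Kaminker:1} on $N$, I would put the family in a standard form on $N$ where $g_0$ and $g_1$ are explicit and the crossing locus is easy to locate; then I would apply the scaling operation of \cite[p.~324]{Douglas-Kaminker:1} to raise $g_1$ slightly outside a shrinking sub-tube around $C \cup \gamma \cup B_0$. This slides the crossing $C$ along $\gamma$ into $B_0$, without touching $A_-$, without creating new intersections of $G_0$ with $G_{\pm 1}$ outside the tube, and without violating (A). Iterating over the (finitely many, by compactness) components of $A_+ \setminus B_0$, and enlarging $B_0$ at each stage to a slightly larger ball still contained in $M \setminus A_-$, eventually engulfs all of $A_+$ inside a single ball $B$, which then verifies (B).

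The main obstacle is this routing step: the arcs $\gamma$ must genuinely avoid $A_-$ and the supports of the deformations must be small enough that (A) persists throughout the iteration. Both hinge on the codimension estimate $\dim A_- \le 1$ in the closed 3-manifold $M$, which ensures $M \setminus A_-$ is open and path-connected, together with the fact that the flattening and scaling moves of \cite{Douglas-Kaminker:1} can be localized in arbitrarily small neighbourhoods of $\gamma$. A bookkeeping argument then shows the deformation does not alter the homotopy class of the family as an element of $K^1(M)$.
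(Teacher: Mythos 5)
Your approach is genuinely different from the paper's, and the differences matter. The paper first \emph{localizes} $\pi(G_0\cap G_1)$ and $\pi(G_0\cap G_{-1})$ by triangulating $M$ finely relative to disjoint open sets $W\supseteq\pi(G_0\cap G_1)$, $V\supseteq\pi(G_0\cap G_{-1})$, and then running the skeletal induction of \cite{Douglas-Kaminker:1} over the 0-, 1- and 2-skeleta. After that induction both crossing loci sit in interiors of finitely many 3-simplices, and the ball $B$ is built by joining the relevant 3-simplices by tubes through the (path-connected) complement of the others. You replace this with a generic-transversality perturbation followed by ``sliding'' components of $A_+$ along arcs into a growing ball $B_0$.

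Two steps in your version are genuine gaps. First, ``finitely many, by compactness'' is not a valid inference: a compact subset of $M$ (for instance a convergent sequence together with its limit) can have infinitely many connected components, so your iteration over components of $A_+\setminus\interior(B_0)$ need not terminate. The paper's skeletal induction is precisely what produces finiteness, since after it the crossing locus lies in the interiors of a finite collection of top simplices; you have nothing playing that role. Second, the appeal to a ``generic transversality perturbation'' that drives $A_\pm$ to codimension $\ge 2$ is not justified within the framework of the paper, which deliberately avoids abstract genericity and instead works with the explicit flattening/scaling moves of \cite{Douglas-Kaminker:1}; moreover the scaling move as defined there raises or lowers an eigenvalue branch on a region, which eliminates or creates crossings but is not the same as translating a fixed crossing component along an arc, so the sliding mechanism would itself need to be constructed. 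A smaller point: the worry about making $A_+\cap A_-$ empty is vacuous, since $G_k$ is the graph of the ordered eigenvalue function $g_k$ and a point $x\in A_+\cap A_-$ would force $g_{-1}(x)=g_0(x)=g_1(x)$, i.e.\ a point of $G_{-1}\cap G_0\cap G_1$, which (A) already forbids.
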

\begin{proof}
We proceed as in \cite[p. 325]{Douglas-Kaminker:1}.  Since the multiplicity is at most 2 on $G_0$, we can find open sets with disjoint closures, $V$ and $W$ contained in $M$, with $\pi(G_0 \cap G_1) \cont W$ and $\pi(G_0 \cap G_{-1}) \cont V$.   Triangulate $M$ finely enough so that any simplex which meets  $\pi(G_0 \cap G_1)$ is contained in $W$ and any which meets  $(G_0 \cap G_{-1})$ is contained in $V$.  Now we proceed, inductively over skeleta, to deform the family so that $G_0$ is separated from $G_1$ and $G_{-1}$, over the 0, 1 and 2 skeletons . 
At this stage, $G_0 \cap G_1$ and $G_0 \cap G_{-1}$ for the deformed families are contained in the interiors of 3-simplices, and hence in sets homeomorphic to 3-balls.  Now, if we remove from $M$ the 3-balls containing $G_0 \cap G_{-1}$, the result is path connected.  Thus we may find paths between the 3-balls containing $G_0 \cap G_1$ and avoiding the 3-balls containing $(G_0 \cap G_{-1})$, and then thicken them to be tubes.  This can be done in such a way that the result is a single closed 3-ball containing   $G_0 \cap G_1$ with $(G_0 \cap G_{-1})$ in its exterior.
\end{proof}

This yields the following theorem.

\begin{thm}
 Any family over a closed connected 3-manifold satisfying (A) is equivalent to one of the form  $\alpha^*(sf) + c^* ( \{ \partial^{k}_z\} )$, where $sf$ is the family over the circle with spectral flow 1, and $\alpha : M \to S^1$ is the map representing $sf(\dx)$. 
\end{thm}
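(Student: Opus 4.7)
The plan is to reduce to the setting already handled by the preceding theorem: subtract off the spectral flow contribution, arrange that the result satisfies both (A) and (B), and then invoke that theorem to identify the reduced family with a Mickelsson pullback.

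First I would form the difference family. Since $sf(\dx) \in H^1(M,\Z) = [M,S^1]$, choose a representative $\alpha : M \to S^1$, and let $sf$ denote a fixed family on $S^1$ with spectral flow equal to $1$. By the additivity of spectral flow under the direct sum operation recalled at the start of this section, the family $\dx - \alpha^*(sf)$ has vanishing spectral flow class. Since $sf$ may be chosen with simple spectrum, $\alpha^*(sf)$ has multiplicity one at every point of $M$; after a small generic perturbation to remove any accidental coincidences between the eigenvalues of $\dx$ and those of $-\alpha^*(sf)$, the spectrum of the combined family consists of the two spectra superimposed with no new multiplicities created. Condition (A) for $\dx$ therefore transports to an analogous condition for $\dx - \alpha^*(sf)$.

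Next I would apply Proposition \ref{engulf} to deform $\dx - \alpha^*(sf)$ to a family that also satisfies (B). The deformations used there preserve (A) and do not alter spectral flow, so the resulting family meets the hypotheses of the preceding theorem. Applying it yields
\begin{equation}
\dx - \alpha^*(sf) \simeq c^*(\{\partial_z^k\}),
\end{equation}
for an integer $k$ determined by the clutching construction on the boundary $2$-sphere of the ball provided by (B). Adding $\alpha^*(sf)$ to both sides then produces $\dx \simeq \alpha^*(sf) + c^*(\{\partial_z^k\})$, which is the desired equivalence.

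The main obstacle is ensuring that (A) genuinely survives the subtraction of $\alpha^*(sf)$. A priori the two summands' eigenvalues could cross to produce loci of multiplicity $3$, or create triple intersections $G_0 \cap G_1 \cap G_{-1}$ that (A) forbids. I expect this to be handled by the perturbation and flattening techniques of \cite{Douglas-Kaminker:1} used throughout this section: a sufficiently small, generic rescaling of $\alpha^*(sf)$ keeps the two spectral components separated except on a codimension-$2$ locus in $M$, which can then be moved by a further small isotopy off the nontransverse configurations forbidden by (A). A secondary check is that the integer $k$ obtained is well defined independent of the particular representative $\alpha$ and the particular choice of deformation, which should follow from the homotopy invariance of the construction noted after Proposition 2.2.
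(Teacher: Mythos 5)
The paper offers no explicit proof of this theorem; it simply follows "This yields the following theorem" after Proposition~\ref{engulf}, leaving the reader to assemble the earlier pieces. Your reconstruction is exactly the assembly the paper intends: subtract $\alpha^*(sf)$ to kill the spectral flow (this is the reduction set up at the start of Section~3), invoke Proposition~\ref{engulf} to deform to a family satisfying (B), apply the preceding theorem to identify the result with $c^*(\{\partial_z^k\})$, and add $\alpha^*(sf)$ back using the group structure of $K^1$. So you and the authors are on the same route.

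The one place you go beyond the paper is your worry about whether (A) survives the passage from $\dx$ to $\dx - \alpha^*(sf)$, and this is worth dwelling on. Strictly as stated, (A) only makes sense once $sf(\dx)=0$, because it refers to the sheet decomposition $\Gamma = \bigcup_k G_k$; so the natural reading of the hypothesis is that (A) is a condition on the spectral-flow-zero reduction $\dx - \alpha^*(sf)$, not on $\dx$ itself, in which case there is nothing to transport. If, however, one reads (A) as a multiplicity-$\leq 2$ condition on $\dx$ with nonzero spectral flow, then your point is a genuine gap in the paper's exposition: superimposing the simple spectrum of $-\alpha^*(sf)$ on a multiplicity-$2$ locus of $\dx$ can a priori create multiplicity~$3$, and this must be cleared by a small generic perturbation or rescaling before Proposition~\ref{engulf} can be applied. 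Your codimension heuristic (mult-$2$ locus is codimension~$2$ in a $3$-manifold, coincidence with a further eigenvalue raises codimension) is the right idea and consistent with the flattening/scaling techniques of \cite{Douglas-Kaminker:1} used throughout the section, but to turn it into a proof you would want to carry it out at the level of the graph, not just dimension counting. Your closing remark that $k$ should be independent of the choices, by homotopy invariance, is also correct and worth stating, though the paper does not make it explicit either.
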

 
\begin{rem}
 This result can be deduced strictly using algebraic topology.  However, we have shown something stronger.  While the ``moves'' we used to pass from the given family to the one in standard form imply homotopy of the families, the converse is not known to be true.  Thus, if we define $\hat K^1(M)$ to be the group generated by families of operators modulo flattening and scaling, there is a surjection
\begin{equation}
 \hat K^1(M) \to K^1(M) \to 0.
\end{equation}
It would be interesting if it were not injective in general.  
\end{rem}
\begin{rem}
The method of studying $K^1(M)$ we use, actually starts with a subset of $M \x \R$ with an eigenspace associated to each point--that is, one has an enhanced graph.  Over sets of constant multiplicity one has a vector bundle.  This could be viewed as a generalized local coefficient system on $M$ and one could then try to define twisted cohomology groups.  A theory along these lines was developed in \cite{Mrowka-Kronheimer:1}.  It would also be useful to consider a notion of concordance, apriori weaker than homotopy, for enhanced graphs and obtain a result stating that the concordance is trivial if a family of generalized characteristic classes agree.
\end{rem}
\begin{rem}
 Suppose we have a family , $\dx$, over a closed, oriented 3-manifold which satisfies (A) and (B).  We can deform as in Proposition \ref{engulf} and proceed further so that $G_0 \cap G_1$ consists of a single point, $\{ x\} $.  Then $k\{x\}$ defines a 0-dimensional homology class which is Poincar\'e dual to the 3-dimensional class $\G(\dx)$.  This is analogous to a result of Cibotaru, \cite{Cibotaru:1}.
\end{rem}

\section{Appendix}

We review the construction and properties of the intertwining operators in the case at hand-- that is, two projections $P,Q$ with $P-Q$ of dimension N, so $P \succ Q$.  Given a Hilbert space $\h$, we will denote by $\bar{\h}$ the conjugate Hilbert space. Decompose $\h$ using $P$ and $Q$,
\begin{equation}
 \h = P\h \oplus (1-P)\h = Q\h \oplus (1-Q)\h.
\end{equation}

Then we have 
\begin{equation}
\begin{split}
              \F_Q (\h)& = \F(Q\h) \te \F(\overline{(P-Q)\h})\te \F(\overline{(I-P)\h})\\
              \F_P (\h)& = \F(Q\h) \te \F((P-Q)\h)\te \F(\overline{(I-P)\h}).
\end{split}
\end{equation}

The isomorphism $S_{Q,P} : \F_P (\h) \to \F_Q (\h)$ is then defined to be 
\begin{equation}
 S_{Q,P} = I_{\F(Q\h)} \te \tilde{S}_{Q,P} \te I_{\F(\overline{(I-P)\h)}},
\end{equation}a non-zero element of the top exterior power
where $\tilde{S}_{P,Q} : \F({(P-Q)\h}) \to \F(\overline{(P-Q)\h})$ is defined as follows.  Choose a volume element $\omega \in \F((P-Q)\h)$, that is, a non-zero element of the top exterior power, $\Lambda^{N}((P-Q)\h)$.  Then $\tilde{S}_{P,Q}$, on $\Lambda^{k}((P-Q)\h)$, is the composition,
\begin{equation}
\label{d1}
\begin{split}
 \Lambda^{k}((P-Q)\h) \xrightarrow{\Theta} \Lambda^{N-k}((P-Q)\h)^* \to \\ \to \Lambda^{N-k}(((P-Q)\h)^*) \to \Lambda^{N-k}(\overline{(P-Q)\h}), 
\end{split}
\end{equation}
where the first map is $\Theta(x)(y) = <x \wedge y , \omega>$ and the latter two are canonical isomorphisms.  Thus, the composite isomorphism depends only on the choice of $\omega$. 


One may view (\ref{d1}) as defining an isomorphism
\begin{equation}
\label{d2}
  \bigoplus_{k=0}^ N \left( \Lambda^{k}((P-Q)\h)\right) \te \Lambda^{N}((P-Q)\h) \to \bigoplus_{k=0}^ N\Lambda^{N-k}(\overline{(P-Q)\h}).
\end{equation}

If the projections, $P$ and $Q$, depend continuously on  $x \in M$, then (\ref{d2}) extends to an isomorphism of bundles,
\begin{equation}
\F((P-Q)\h) \te \mathcal{L}_{P,Q} \to \F(\overline{(P-Q)\h}).
\end{equation}

The map $\tilde{S}_{Q,P}$ is obtained by fixing a non-zero cross-section of $\mathcal {L}_{P,Q}$, and it depends on that choice.  In the cases considered in the paper one has that $\mathcal {L}_{P,Q}$ is trivial.

\providecommand{\bysame}{\leavevmode\hbox to3em{\hrulefill}\thinspace}
\providecommand{\MR}{\relax\ifhmode\unskip\space\fi MR }
\providecommand{\MRhref}[2]{%
  \href{http://www.ams.org/mathscinet-getitem?mr=#1}{#2}
}
\providecommand{\href}[2]{#2}


\end{document}